\title{$\mathbb{Z}_3$-actions on Horikawa surfaces.}
\author{Vicente Lorenzo}
\date{}
\newtheorem{theorem}{Theorem}
\newtheorem{proposition}{Proposition}
\theoremstyle{remark}\newtheorem{remark}{Remark}
\theoremstyle{remark}\newtheorem{example}{Example}
\renewenvironment{proof}[1][\proofname]{%
  \par\pushQED{\qed}\normalfont%
  \topsep6\p@\@plus6\p@\relax
  \trivlist\item[\hskip\labelsep\bfseries#1\@addpunct{.}]%
  \ignorespaces
}{%
  \popQED\endtrivlist\@endpefalse
}
\newenvironment{acknowledgements}{\textit{Acknowledgements.}}{}
\newcommand\blfootnote[1]{%
  \begingroup
  \renewcommand\thefootnote{}\footnote{#1}%
  \addtocounter{footnote}{-1}%
  \endgroup
}
\renewcommand{\thefootnote}{\textit{\alph{footnote}}}
\begin{document}

\maketitle

\begin{abstract}
Minimal algebraic surfaces of general type $X$ such that $K^2_X=2\chi(\mathcal{O}_X)-6$ are called Horikawa surfaces. In this note $\mathbb{Z}_3$-actions
on Horikawa surfaces are studied. 
The main result states that given an admissible pair $(K^2, \chi)$ such that $K^2=2\chi-6$, all the connected components of Gieseker's moduli space $\mathfrak{M}_{K^2,\chi}$ contain surfaces admitting a $\mathbb{Z}_3$-action. 
On the other hand, the examples considered allow to produce normal stable surfaces that do not admit a $\mathbb{Q}$-Gorenstein smoothing. This is illustrated by constructing  non-smoothable normal surfaces in the KSBA-compactification $\overline{\mathfrak{M}}_{K^2,\chi}$ of Gieseker's moduli space $\mathfrak{M}_{K^2,\chi}$ for every admissible pair $(K^2, \chi)$ such that $K^2=2\chi-5$. Furthermore, the surfaces constructed belong to connected components of $\overline{\mathfrak{M}}_{K^2,\chi}$ without canonical models.
\blfootnote{\textbf{Mathematics Subject Classification (2010):} MSC 14J29}
\blfootnote{\textbf{Keywords:} Surfaces of general type $\cdot$  $\mathbb{Z}_3$-covers $\cdot$ Moduli spaces $\cdot$ Horikawa surfaces}
\end{abstract}

\section{Introduction.}

Let $X$ be an algebraic surface over the complex numbers $\mathbb{C}$, which will be the ground field throughout the paper. The main numerical invariants of $X$ are the self-intersection of its canonical class $K^2_X$ and its holomorphic Euler characteristic $\chi(\mathcal{O}_X)$. 
 If $X$ is minimal and of general type it is well known (cf. \cite[Chapter VII]{Barth2004})
that the following inequalities are satisfied 
\begin{equation}\label{GeneralType}
 \chi(\mathcal{O}_X)\geq 1,\quad K_X^2\geq 1, \quad 2\chi(\mathcal{O}_X)-6\leq K_X^2\leq 9\chi(\mathcal{O}_X).
\end{equation}
Minimal algebraic surfaces of general type $X$   such that $K^2_X=2\chi(\mathcal{O}_X)-6$ were studied by Enriques \cite{EnriquesPaper}, \cite[Section VIII.11]{EnriquesBook} but they are frequently called Horikawa surfaces because of 
Horikawa's contribution to their deformation theory \cite{Hor1}. In particular,
denoting by $\mathfrak{M}_{K^2,\chi}$ Gieseker’s moduli space of canonical models of surfaces of general type
with fixed self-intersection of the canonical class $K^2$ and fixed holomorphic Euler characteristic $\chi$, Horikawa showed that (see Theorem \ref{DefClass}):
\begin{enumerate}
 \item[-] If $K^2=2\chi-6\notin 8\cdot\mathbb{Z}$ then $\mathfrak{M}_{K^2,\chi}$ has a unique connected component.
 \item[-] If $K^2=2\chi-6\in 8\cdot\mathbb{Z}$ then $\mathfrak{M}_{K^2,\chi}$ has two connected components $\mathfrak{M}^{I}_{K^2,\chi}$ and $\mathfrak{M}^{II}_{K^2,\chi}$.
\end{enumerate}

If $(K^2, \chi)$ is an admissible pair (i.e. a pair of integers satisfying the inequalities (\ref{GeneralType})) such that $K^2=2\chi-6$, 
every connected component of
Gieseker's moduli space $\mathfrak{M}_{K^2,\chi}$ contains surfaces admitting a $\mathbb{Z}_2^2$-action by \cite[Theorem 1]{Lorenzo2021}.
In this note $\mathbb{Z}_3$-actions on Horikawa surfaces are studied.
The main result is the following:

\begin{theorem}\label{Z3OnEvenHorikawa}
Let $(K^2, \chi)$ be an admissible pair such that $K^2=2\chi-6$. Then
every connected component of $\mathfrak{M}_{K^2, \chi}$ contains surfaces admitting a $\mathbb{Z}_3$-action.
\end{theorem}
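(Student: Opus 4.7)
The plan is to realize, for every admissible pair $(K^2,\chi)$ with $K^2=2\chi-6$ and every connected component of $\mathfrak{M}_{K^2,\chi}$, an explicit Horikawa surface in that component as a double cover of a rational surface endowed with a $\mathbb{Z}_3$-invariant branch divisor. First I would invoke Horikawa's structure theorem (the referenced Theorem \ref{DefClass}), which realizes the generic surface in each component as a double cover $\pi\colon X\to\Sigma$, where $\Sigma$ is a Hirzebruch surface $\mathbb{F}_n$ for the unique component in the non-divisible case and for one of the two components when $K^2=2\chi-6\in 8\mathbb{Z}$, and where $\Sigma$ is the singular quadric cone (the contraction of the negative section of $\mathbb{F}_2$) for the remaining component in the divisible case. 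In every case the branch locus $B$ lies in a linear system of the form $|6C_0+mf|$ (or its analogue on the cone) with $m$ determined by $\chi$ and $n$. The theorem thus reduces to exhibiting, in each such linear system, a divisor that is simultaneously $\mathbb{Z}_3$-invariant with respect to a suitable $\mathbb{Z}_3$-action on $\Sigma$ and sufficiently generic so that the associated double cover is a Horikawa surface in the prescribed component.

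Next I would fix a $\mathbb{Z}_3$-action on $\Sigma$ coming from its toric structure: let $\zeta$ be a primitive cube root of unity and act on the base $\mathbb{P}^1$ by $[x_0:x_1]\mapsto[\zeta x_0:x_1]$, extended fiberwise in a way compatible with the projection to $\mathbb{P}^1$. The invariant subsystem $|6C_0+mf|^{\mathbb{Z}_3}$ is spanned by those monomial sections whose exponent in $x_0$ is divisible by three, and a direct dimension count shows it is non-empty and of positive dimension for every admissible $m$. A $\mathbb{Z}_3$-equivariant Bertini argument then produces an invariant branch divisor $B$ that is smooth away from the fixed locus, and a further genericity argument within the invariant subsystem ensures that $B$ meets the two fixed fibers transversely, so that $B$ is smooth (or acquires only negligible singularities that disappear on passing to the canonical model). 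The resulting double cover $X$ inherits a lifted $\mathbb{Z}_3$-action from $\Sigma$, and by Horikawa's description its canonical model lies in the intended connected component of $\mathfrak{M}_{K^2,\chi}$.

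The main obstacle I anticipate is the treatment of the component whose surfaces cover the quadric cone. There the branch divisor must pass through the vertex with a prescribed singularity type, which constrains the invariant subsystem and requires care to verify that a generic $\mathbb{Z}_3$-invariant member still gives a surface with at worst rational double points, so that the canonical model is well-defined and deforms to a general cone-cover. A secondary subtlety is the small-$\chi$ range, where the invariant subsystem is of low dimension and must be analyzed by hand to confirm both the existence of smooth invariant members and the correct numerical invariants of the resulting cover. Once these two points are settled, the $\mathbb{Z}_3$-equivariant Bertini step applies uniformly and yields the statement for every admissible pair and every component simultaneously.
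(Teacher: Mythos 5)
Your overall strategy---take the canonical double cover given by Horikawa's structure theorem and choose the branch divisor invariant under an order-$3$ automorphism of the image surface, then lift the automorphism---is in fact exactly how the paper treats the components $\mathfrak{M}^{II}_{8k,4k+3}$ (explicit invariant curves on $\mathbb{P}^2$ and on $\mathbb{F}_{2k+2}$); for all the remaining components the paper instead builds $\mathbb{Z}_3$-covers of blown-up Hirzebruch surfaces, so the action comes for free as the Galois group, at the price of having to prove separately that those surfaces lie in $\mathfrak{M}^{I}$. The first genuine problem with your version is that the reduction misreads Theorem \ref{DefClass}: for $K^2=8k$ with $k\ge 2$ the second component is \emph{not} associated with the quadric cone. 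Its surfaces are canonical double covers of the Hirzebruch surface $\mathbb{F}_{2k+2}$, and what distinguishes them from the first component is that the branch locus is disconnected, namely $\Delta_0+B'$ with $B'\in|5\Delta_0+(10k+10)F|$ disjoint from $\Delta_0$; for $k=1$ the image is $\mathbb{P}^2$ or the cone over a rational curve of degree $4$ in $\mathbb{P}^4$ (the contraction of the negative section of $\mathbb{F}_4$), not of $\mathbb{F}_2$. So the case you reserve for special treatment (branch through the vertex of the quadric cone) is not the case that actually occurs, and the feature that really separates the two components---the forced target $\mathbb{F}_{2k+2}$ together with the connectedness of the branch locus---never enters your argument.

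The second problem is that the ``equivariant Bertini'' step, which you treat as routine outside a small-$\chi$ range, is where the substance of the proof lies, and it fails as stated in infinitely many cases. With the action $(t_1\colon t_2;x_1\colon x_2)\mapsto(\zeta t_1\colon t_2;x_1\colon x_2)$ the invariant subsystem is constrained along the two fixed fibers: on $\mathbb{F}_{2k+2}$ with branch class $5\Delta_0+(10k+10)F$, a monomial $x_1^{d_1}x_2^{5-d_1}t_1^{c_1}t_2^{c_2}$ is invariant only if $3\mid c_1$, and since $c_1+c_2=(2k+2)(5-d_1)$ one checks that for $k\not\equiv 2 \pmod 3$ every invariant member passes through the fixed point $(1\colon 0;0\colon 1)$, while for $k\equiv 1\pmod 3$ no constant or linear term survives, so \emph{every} invariant member is singular there; for other targets $\mathbb{F}_n$ with $n\equiv 0\pmod 3$ and unsuitable fiber degree the invariance condition can even force a whole fixed fiber into the branch divisor. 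Hence ``a generic invariant member is smooth or has only negligible singularities'' is a congruence-by-congruence claim that must be verified for every $\chi$, not just small ones; the paper does this by exhibiting explicit equations and checking that the worst singularity that appears is of type $A_4$ (hence negligible), and your parenthetical hedge supplies no such argument. Until you either carry out that case analysis (choosing $n$, the fiber degrees and possibly the weights of the action so that the singularities stay canonical) or switch to the paper's $\mathbb{Z}_3$-cover construction for the components where the target is not forced, the proof is incomplete.
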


That being said, let us denote by $\overline{\mathfrak{M}}_{K^2, \chi}$ the KSBA-compactification of Giekeser's moduli space $\mathfrak{M}_{K^2, \chi}$. Rollenske \cite{Rollenske2021} has recently proved  that for any admissible pair $(K^2, \chi)$ there exist non-normal stable surfaces $X$ with invariants $K^2_X=K^2$ and $\chi(\mathcal{O}_X)=\chi$ that do not admit a $\mathbb{Q}$-Gorenstein smoothing. Furthermore, the surfaces constructed by Rollenske belong to 
a connected component of $\overline{\mathfrak{M}}_{K^2, \chi}$  without canonical models.
The examples that will be considered in the proof of Theorem \ref{Z3OnEvenHorikawa} provide us with a method to construct normal surfaces with this property. We will illustrate the method by showing:

\begin{theorem}\label{MoreIrred}
% For every admissible pair $(K^2, \chi)$
% such that $K^2=2\chi-5$ there exist normal stable surfaces 
% in
% a connected component of $\overline{\mathfrak{M}}_{K^2, \chi}$  without canonical models.
Let $(K^2, \chi)$ be an admissible pair such that $K^2=2\chi-5$.
Then there exist normal stable surfaces $X$
with invariants $K^2_X=K^2$ and $\chi(\mathcal{O}_X)=\chi$ that belong to 
a connected component of $\overline{\mathfrak{M}}_{K^2, \chi}$  without canonical models.
% in
% a connected component of $\overline{\mathfrak{M}}_{K^2, \chi}$  without canonical models.
% with invariants $K^2_X=K^2$ and $\chi(\mathcal{O}_X)=\chi$ that do not admit a $\mathbb{Q}$-Gorenstein smoothing.
% Moreover, 
% In particular,
% $\overline{\mathfrak{M}}_{K^2, \chi}$ has more irreducible and connected components than
% $\mathfrak{M}_{K^2, \chi}$.
\end{theorem}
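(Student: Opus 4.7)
The strategy is to realise $X$ as a normal stable surface carrying a non-$T$ cyclic quotient singularity, thereby ruling out $\mathbb{Q}$-Gorenstein smoothability at the local analytic level. The surface will arise from the $\mathbb{Z}_3$-covers constructed in Theorem \ref{Z3OnEvenHorikawa}, but with branch data adjusted so that the invariants land on the line $K^2 = 2\chi - 5$.

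I would begin by fixing, for each admissible pair with $K^2 = 2\chi - 5$, suitable building data $(Y, L_1, L_2, D_1, D_2)$ for a totally ramified $\mathbb{Z}_3$-cover $f\colon X \to Y$ of a (possibly singular) rational base $Y$. The data will be adapted from the Horikawa cases treated in Theorem \ref{Z3OnEvenHorikawa} and modified so that $X$ carries exactly one $\frac{1}{3}(1,1)$ point together with at worst Du Val singularities, and is otherwise smooth. Using the standard decomposition $f_{\ast}\mathcal{O}_X = \mathcal{O}_Y \oplus L_1^{-1} \oplus L_2^{-1}$ together with $K_X = f^{\ast}(K_Y + L_1 + L_2)$, and accounting for the contribution of the cyclic quotient point to Noether's formula, I would verify that $K^2_X = 2\chi(\mathcal{O}_X) - 5$ and check the ampleness of $K_X$, so that $X$ is KSBA-stable.

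The next step invokes the classification of $T$-singularities: the only cyclic quotient singularities admitting a local $\mathbb{Q}$-Gorenstein smoothing are those of the form $\frac{1}{dn^2}(1, dna-1)$, and $\frac{1}{3}(1,1)$ does not have this form. Consequently $X$ does not admit even a local $\mathbb{Q}$-Gorenstein smoothing at that point, hence no global one either. In particular $X$ is not itself a canonical model and is not a limit of canonical models through a $\mathbb{Q}$-Gorenstein deformation.

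It remains to show that the connected component of $\overline{\mathfrak{M}}_{K^2,\chi}$ containing $X$ contains no canonical model at all. I would prove this by comparing the dimension of the equisingular deformations of $X$, computed either through the sheaf of logarithmic derivations on a resolution or directly through the deformations of the branch data of $f$, with the dimension of the KSBA stratum parametrising normal stable surfaces carrying a $\frac{1}{3}(1,1)$ singularity; matching these dimensions forces the connected component to coincide with the closure of that equisingular stratum, and since every surface in the closure inherits a $\frac{1}{3}(1,1)$ point, none of them is a canonical model. I expect this last step to be the main obstacle, as one must preclude any global deformation of $X$ inside $\overline{\mathfrak{M}}_{K^2,\chi}$ that could replace the $\frac{1}{3}(1,1)$ point by milder singularities while preserving the pair $(K^2,\chi)$, and the argument must be made uniform in the choice of rational base $Y$ across every admissible pair with $K^2 = 2\chi - 5$.
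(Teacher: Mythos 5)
Your proposal has two genuine gaps. First, the local setup is numerically impossible: if the only non-canonical singularity of $X$ is a single $\frac{1}{3}(1,1)$ point (plus at worst Du Val points), then writing $r\colon S\to X$ for the minimal resolution one has $r^*K_X=K_S+\frac{1}{3}C$ with $C$ the $(-3)$-curve, hence $K_X^2=K_S^2+\frac{1}{3}\notin\mathbb{Z}$, since Du Val points do not affect $K^2$. Such an $X$ can never satisfy $K_X^2=2\chi-5$. The number of $\frac{1}{3}(1,1)$ points must be a multiple of $3$; the paper takes exactly three (it leaves three nodes of the branch divisor $D_1+D_2$ unblown up before taking the $\mathbb{Z}_3$-cover), which shifts $K^2$ from $2\chi-6$ to $2\chi-5$ while keeping it an integer. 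A smaller slip in the same part of your plan: for a totally ramified $\mathbb{Z}_3$-cover the canonical bundle formula is $3K_X\equiv f^*(3K_Y+2D_1+2D_2)$ (Proposition \ref{InvariantsCyclic}), not $K_X=f^*(K_Y+L_1+L_2)$, which is the bidouble-cover formula.

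Second, the step you flag as the main obstacle is indeed where the argument fails, and the proposed fix does not work. Non-smoothability of the $\frac{1}{3}(1,1)$ points only shows that the \emph{irreducible} component through $X$ carries no canonical models; matching the dimension of the equisingular deformations of $X$ with that of a KSBA stratum does not force the \emph{connected} component to be the closure of that stratum (the component could be reducible or contain other strata of equal or larger dimension meeting yours), so no mechanism excludes canonical models elsewhere in the same connected component. The paper's route is entirely different and hinges on a deformation invariant: by \cite[Corollary 2.6]{Rollenske2021} it suffices to show $h^0(2K_X)\neq\chi(\mathcal{O}_X)+K_X^2$; since $h^0(2K_X)=\chi(2K_X)$ for stable surfaces \cite[Proposition 3.6]{LiuRollenske2014} and Blache's Riemann--Roch for Weil divisors gives a correction term $-\frac{1}{3}$ for each $\frac{1}{3}(1,1)$ point, the three singular points yield $h^0(2K_X)=\chi(\mathcal{O}_X)+K_X^2-1$, which separates the whole connected component from any component containing canonical models. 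Note that this argument also requires the number of $\frac{1}{3}(1,1)$ points to be a nonzero multiple of $3$, a second reason why ``exactly one'' cannot be the right local configuration.
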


The note is structured as follows. In Section \ref{SectionCyclicCovers}
how to construct cyclic abelian covers and to obtain information about them is explained. Section \ref{Horikawa surfaces.} is devoted to present some properties 
of minimal surfaces of general type $X$ with $K_X^2=2\chi(\mathcal{O}_X)-6$ and to study
their deformation equivalence classes. In Section \ref{FeDescr}
we present a description of the Hirzebruch surface $\mathbb{F}_e$ convenient for our
purposes.
In Section \ref{SectionZ3OnEvenHorikawa}
we prove Theorem
 \ref{Z3OnEvenHorikawa}.
Finally, Section \ref{Z3OnStable} contains a proof of Theorem \ref{MoreIrred} that takes into account the constructions of the previous section. 

 \section{Cyclic abelian covers.}\label{SectionCyclicCovers}
 Let $G$ be a finite abelian group. A $G$-cover of a variety $Y$ is a finite map $f\colon X\to Y$ together with a faithful action of $G$ on $X$ such that $f$ exhibits $Y$ as $X/G$.
The general case was first considered by Pardini \cite{Par1991}, but 
in this note we are just going to deal with $\mathbb{Z}_2$-covers and $\mathbb{Z}_3$-covers.
Abelian $G$-covers of surfaces with cyclic $G$ were already considered by Comessatti \cite{Comessatti}. Another reference for the particular case $G=\mathbb{Z}_2$ is \cite{PerssonDC}. Other references for the particular case $G=\mathbb{Z}_3$ are \cite{Mir1985} or \cite{Tan1991}.
 
 According to \cite[Proposition 2.1]{Par1991} (see also \cite[Remark 3.11]{MendesPardini2021}),
to define a $\mathbb{Z}_d$-cover $X\to Y$ of a smooth and irreducible projective variety $Y$ with normal $X$, it suffices to consider both:

\begin{enumerate}
 \item[-] Effective divisors $D_1,\ldots, D_{d-1}$ such that the branch locus $D_1+\cdots+D_{d-1}$ is reduced.
 \item[-] A line bundle $L$ satisfying $dL\equiv\sum_{j=1}^{d-1} j\cdot D_j$.
   \end{enumerate}
   The set $\{L,D_1,\ldots, D_{d-1}\}$ is called the reduced building data of the cover. 

\begin{remark}\label{RedRedBuildingData} 
 Note that if the Picard group of $Y$ has no $d$-torsion then the line bundle $L$ can be deduced from the divisors $D_1,\ldots, D_{d-1}$. 
 In this note we are only going to consider covers of simply connected surfaces, for which the Picard group has no torsion (cf. \cite[Remark 3.10]{MendesPardini2021}).
\end{remark}

For the reader's convenience we include the standard formulas for $\mathbb{Z}_2$-covers and $\mathbb{Z}_3$-covers in terms of the reduced building data.

\begin{proposition}
 [{{\cite[Section II]{PerssonDC}}}, 
 {{\cite[Proposition 4.2]{Par1991}}}
 ]\label{InvariantsZ2}
Let $Y$ be a smooth and irreducible projective surface and $f\colon X\to Y$ a smooth $\mathbb{Z}_2$-cover with reduced building data $\{L,D\}$. Then:
\begin{equation*}
\begin{split}
K_X\equiv f^*(K_Y+ L),\\
K_X^2=2(K_Y+ L)^2,\\
p_g(X)=p_g(Y)+h^0(K_Y+L),\\
\chi(\mathcal{O}_X)=2\chi(\mathcal{O}_Y)+\frac{1}{2}L(K_Y+L).
\end{split}
\end{equation*}
\end{proposition}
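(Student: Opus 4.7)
The plan is to derive the four formulas from the basic structural fact that for a smooth $\mathbb{Z}_2$-cover $f\colon X\to Y$ with reduced building data $\{L,D\}$ (so $2L\equiv D$), the pushforward splits as $f_*\mathcal{O}_X=\mathcal{O}_Y\oplus L^{-1}$ according to the two characters of $\mathbb{Z}_2$, and the ramification divisor $R\subset X$ satisfies $f^*D=2R$ and $f^*L\equiv R$.

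First I would establish the canonical bundle formula. Since $f$ is a finite flat cover of smooth varieties branched along $D$, Hurwitz gives $K_X\equiv f^*K_Y+R$; combining with $R\equiv f^*L$ yields $K_X\equiv f^*(K_Y+L)$. Squaring and using $(f^*\alpha)^2=\deg(f)\,\alpha^2=2\alpha^2$ on the smooth surface $Y$ gives $K_X^2=2(K_Y+L)^2$.

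Next I would compute the cohomological invariants by pushing forward to $Y$ and using the projection formula together with the splitting of $f_*\mathcal{O}_X$. For the geometric genus,
\begin{equation*}
H^0(X,K_X)=H^0(X,f^*(K_Y+L))=H^0(Y,(K_Y+L)\otimes f_*\mathcal{O}_X)=H^0(Y,K_Y+L)\oplus H^0(Y,K_Y),
\end{equation*}
where the last summand uses $(K_Y+L)\otimes L^{-1}=K_Y$. Taking dimensions gives $p_g(X)=p_g(Y)+h^0(K_Y+L)$. For the holomorphic Euler characteristic, the finite map $f$ gives $\chi(\mathcal{O}_X)=\chi(f_*\mathcal{O}_X)=\chi(\mathcal{O}_Y)+\chi(L^{-1})$, and Riemann--Roch on $Y$ yields
\begin{equation*}
\chi(L^{-1})=\chi(\mathcal{O}_Y)+\tfrac{1}{2}(-L)\cdot(-L-K_Y)=\chi(\mathcal{O}_Y)+\tfrac{1}{2}L(K_Y+L),
\end{equation*}
so that $\chi(\mathcal{O}_X)=2\chi(\mathcal{O}_Y)+\tfrac{1}{2}L(K_Y+L)$, as claimed.

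The only step that requires any care is the splitting $f_*\mathcal{O}_X=\mathcal{O}_Y\oplus L^{-1}$ with its compatible algebra structure encoding the equation $2L\equiv D$; this is the content of Pardini's general result specialised to $d=2$ and is what makes every other step purely formal. Once that input is granted, the rest of the argument is a direct application of Hurwitz, the projection formula, and Riemann--Roch, so there is no genuine obstacle beyond correctly bookkeeping the eigenspace decomposition.
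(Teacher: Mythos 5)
Your proof is correct, and since the paper states this proposition without proof (citing Persson and Pardini), the right comparison is with those sources: your argument — the eigensheaf splitting $f_*\mathcal{O}_X=\mathcal{O}_Y\oplus L^{-1}$, Hurwitz with $R\equiv f^*L$, the projection formula, and Riemann--Roch on $Y$ — is exactly the standard derivation given there, specialised to $d=2$. No gaps; all four formulas follow as you indicate.
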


\begin{remark}\label{CanonicalImageSimpleCyclicCover}
Let $Y$ be a smooth surface and let us consider a smooth $\mathbb{Z}_2$-cover $\pi\colon X\to Y$ with reduced building data $\{L,D\}$. Let us assume that 
$h^0(K_Y)=0$. Then $K_X=\pi^*(K_Y+L)$ and $p_g(X)=h^0(K_Y+L)=:N$ by 
the standard formulas for $\mathbb{Z}_2$-covers (see Proposition \ref{InvariantsZ2}). If we denote  by $i\colon Y\dashrightarrow \mathbb{P}^{N-1}$ the (maybe rational) map defined by the complete linear system $|K_Y+L|$, it follows 
that $i\circ \pi$ is the map induced by the complete linear system $|K_X|$, i.e., it is the canonical map of $X$. In particular $i(Y)$ is the canonical image of $X$.
\end{remark}

\begin{proposition}
 [{{\cite[Proposition 10.3]{Mir1985}}}, 
 {{\cite[Proposition 4.2]{Par1991}}},
 {{\cite[Lemma 3.1]{Tan1991}}}
 ]\label{InvariantsCyclic}
Let $Y$ be a smooth and irreducible projective surface and $f\colon X\to Y$ a smooth $\mathbb{Z}_3$-cover with reduced building data $\{L,D_1,D_2\}$. Then:
\begin{equation*}
\begin{split}
3K_X\equiv f^*(3K_Y+ 2D_1+2D_2),\\
3K_X^2=(3K_Y+ 2D_1+2D_2)^2,\\
p_g(X)=p_g(Y)+h^0(K_Y+L)+h^0(K_Y+D_1+D_2-L),\\
\chi(\mathcal{O}_X)=3\chi(\mathcal{O}_Y)+\frac{1}{2}L(K_Y+L)+\frac{1}{2}
(D_1+D_2-L)(K_Y+D_1+D_2-L).
\end{split}
\end{equation*}
\end{proposition}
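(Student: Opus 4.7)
The plan is to derive all four formulas from the $\mathbb{Z}_3$-equivariant decomposition of $f_{*}\mathcal{O}_X$ together with a Hurwitz-type ramification analysis, following the scheme of \cite{Mir1985}, \cite{Par1991}, \cite{Tan1991}. First I would invoke the structure theorem for abelian covers: since $f$ is Galois with group $\mathbb{Z}_3$, pushing forward the structure sheaf and splitting into character eigenspaces gives $f_{*}\mathcal{O}_X = \mathcal{O}_Y \oplus L_1^{-1} \oplus L_2^{-1}$, where the $L_i$ are determined by the building data. Setting $L_1\equiv L$, the multiplication law of the $\mathcal{O}_Y$-algebra $f_{*}\mathcal{O}_X$ together with the relation $3L\equiv D_1+2D_2$ forces $3L_2\equiv 2D_1+D_2$, and in particular $L_1+L_2\equiv D_1+D_2$, i.e.\ $L_2\equiv D_1+D_2-L$ in $\mathrm{Pic}(Y)$ (using that on the surfaces to which the formula will be applied $\mathrm{Pic}(Y)$ has no $3$-torsion, cf.\ Remark \ref{RedRedBuildingData}).

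Once the eigensheaf identification is in place, the canonical bundle formula is obtained from the local model of the cover. Over a branch component $D_j$ the map $f$ is étale-locally of the form $y^3=x$, so if $R_j\subset X$ denotes the reduced preimage of $D_j$, then $f^{*}D_j=3R_j$ and the total ramification divisor is $R=2R_1+2R_2$. The relative dualizing sheaf formula $K_X=f^{*}K_Y+R$ then gives $3K_X\equiv f^{*}(3K_Y+2D_1+2D_2)$. Squaring both sides and using the projection formula $(f^{*}D)^2=3D^2$, which holds because $\deg f=3$, yields $3K_X^2=(3K_Y+2D_1+2D_2)^2$.

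The holomorphic invariants are then computed by pushforward. By Grothendieck duality applied to the finite flat morphism $f$, one has $f_{*}\omega_X\cong\mathcal{H}om_{\mathcal{O}_Y}(f_{*}\mathcal{O}_X,\omega_Y)=\omega_Y\oplus\omega_Y(L_1)\oplus\omega_Y(L_2)$, and since higher direct images of $\omega_X$ vanish, the Leray spectral sequence collapses to give $p_g(X)=p_g(Y)+h^0(K_Y+L)+h^0(K_Y+D_1+D_2-L)$ after substituting the value of $L_2$. Similarly, $\chi(\mathcal{O}_X)=\chi(f_{*}\mathcal{O}_X)=\chi(\mathcal{O}_Y)+\chi(L_1^{-1})+\chi(L_2^{-1})$, and applying Riemann--Roch on $Y$ to each line bundle and substituting $L_2\equiv D_1+D_2-L$ produces the stated expression.

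The step I expect to require the most care is the identification $L_2\equiv D_1+D_2-L$, since it depends on the precise normalizations in Pardini's building data and on the torsion-freeness hypothesis on $\mathrm{Pic}(Y)$; the remaining steps are standard intersection-theoretic and Riemann--Roch computations on $Y$ that produce the formulas verbatim.
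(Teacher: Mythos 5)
Your proposal is correct and follows the same standard route as the references the paper cites for this proposition (the paper gives no independent proof): the eigensheaf splitting $f_*\mathcal{O}_X=\mathcal{O}_Y\oplus L_1^{-1}\oplus L_2^{-1}$, the ramification formula for the totally ramified reduced branch divisors, and Riemann--Roch plus duality on the pushforward. The only minor refinement is that the identification $L_2\equiv D_1+D_2-L$ follows unconditionally from the algebra relation $2L_1\equiv L_2+D_2$ together with $3L\equiv D_1+2D_2$, so one does not actually need the absence of $3$-torsion in $\mathrm{Pic}(Y)$ at that step, though your hedge is harmless in the paper's setting (Remark \ref{RedRedBuildingData}).
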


 \begin{example}\label{1311Singularities}
Let $r\colon S\to X$ be the minimal resolution of a normal surface singularity $(X,p)$
whose exceptional divisor $r^{-1}(p)$ is a $(-3)$-curve $C$. Then $p$ is said to be a $\frac{1}{3}(1,1)$-singularity.
 These singularities are log canonical because $K_S=r^*K_X-\frac{1}{3}C$ 
 (cf. \cite{Alexeev1992}). 
%  In addition, 
%  $\frac{1}{3}(1,1)$-singularities are not Gorenstein because $K_X$ has index $3$. 
 
 There is an easy way to obtain 
 $\frac{1}{3}(1,1)$-singularities via $\mathbb{Z}_3$-covers (cf. \cite{Tan1991}). Indeed, 
 let $\pi\colon X\to Y$ be a $\mathbb{Z}_3$-cover with reduced building data $\{L,D_1,D_2\}$.
 Let us assume that $D_1$ and $D_2$ intersect in a point $q\in Y$ giving rise to an ordinary double point
 on the branch locus $B=D_1+D_2$ of $\pi$. Then $X$ has a $\frac{1}{3}(1,1)$-singularity $p$ over $q$ (see \cite[Proposition 3.3]{Par1991}).
 We can resolve this singularity in a canonical way 
 (see \cite[Section II]{Tan1991}).
 Let $b\colon \widetilde{Y}\to Y$ be the blow-up of $Y$ at $q$ with exceptional divisor $E$. Then there is 
 a $\mathbb{Z}_3$-cover $\widetilde{X}\to\widetilde{Y}$ with branch locus $\widetilde{B}=\widetilde{D_1}+\widetilde{D_2}$
 where $\widetilde{D_i}=b^*D_i-E$. The induced map $\widetilde{X}\to X$ resolves the 
$\frac{1}{3}(1,1)$-singularity. Moreover, using the formulas for $\mathbb{Z}_3$-covers it can be proved that 
$\chi(\mathcal{O}_X)=\chi(\mathcal{O}_{\widetilde{X}})$ and $K^2_X=K^2_{\widetilde{X}}+\frac{1}{3}$.

We note that $\frac{1}{3}(1,1)$-singularities do not admit $\mathbb{Q}$-Gorenstein smoothings 
by \cite[Proposition 3.11]{KSB88}. 
Moreover, they are $\mathbb{Q}$-Gorenstein rigid (cf. \cite[Basic Concepts]{AkhtarEtAl2016}).
\end{example}

\section{Horikawa surfaces on the line \texorpdfstring{$K^2=2\chi-6$}{K2chi6}.} \label{Horikawa surfaces.} 

Horikawa \cite{Hor1} studied minimal surfaces of general type $X$ such that $K_X^2=2\chi(\mathcal{O}_X)-6$.
The following theorems are some of the results proved in \cite{Hor1}.

\begin{theorem}[{{\cite[Lemma 1.1]{Hor1}}}]\label{StructureEvenHorikawa}
Let $X$ be a minimal algebraic surface with $K_X^2=2\chi(\mathcal{O}_X)-6$
and $\chi(\mathcal{O}_X) \geq 4$. Then the canonical system $|K_X|$ has no base point. Moreover,
the canonical map $\varphi_{K_X}\colon X\to \mathbb{P}^{p_g(X)-1}$ is a morphism 
of degree 2 onto a surface of degree $p_g(X)-2$ in $\mathbb{P}^{p_g(X)-1}$.
\end{theorem}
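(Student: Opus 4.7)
The plan is to proceed in three stages. First, I extract the basic invariants: combining the hypothesis $K_X^2 = 2\chi(\mathcal{O}_X) - 6$ with $\chi(\mathcal{O}_X) = 1 + p_g(X) - q(X)$ and with the Noether inequality $K_X^2 \geq 2 p_g(X) - 4$ immediately forces $q(X) = 0$, whence $p_g(X) = \chi(\mathcal{O}_X) - 1 \geq 3$ and $K_X^2 = 2 p_g(X) - 4$. This reduces the theorem to the classical statement about minimal surfaces of general type on the Noether line with $p_g \geq 3$.

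Second, I rule out that $|K_X|$ is composed with a pencil. An irrational pencil would, via Stein factorization, produce a surjection onto a curve of positive genus, contradicting $q(X) = 0$. A rational pencil would give a decomposition $K_X \equiv \delta F + Z$ with $F$ a general fibre, $F^2 = 0$, $K_X \cdot F = 2g(F) - 2 \geq 2$ (as a minimal surface of general type admits no rational or elliptic fibration), and $\delta \geq p_g(X) - 1$ (by non-degeneracy of the image curve in $\mathbb{P}^{p_g(X) - 1}$), yielding $K_X^2 \geq 2(p_g(X) - 1) > 2 p_g(X) - 4$, a contradiction. Hence $\Sigma := \varphi_{K_X}(X)$ is a non-degenerate surface in $\mathbb{P}^{p_g(X) - 1}$.

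Third, I bound the degrees. Writing $|K_X| = |M| + F$ for the decomposition into moving and fixed parts, setting $d := \deg \varphi_{K_X}$ and $\nu := \deg \Sigma$, and invoking $\nu \geq p_g(X) - 2$ (the classical bound for a non-degenerate surface in projective space), one finds
\begin{equation*}
2(p_g(X) - 2) = K_X^2 \geq K_X \cdot M \geq M^2 \geq d\nu \geq d(p_g(X) - 2),
\end{equation*}
whence $d \leq 2$. The case $d = 1$ is excluded by Castelnuovo's inequality $K_X^2 \geq 3 p_g(X) - 7$ for surfaces with birational canonical map, which contradicts $K_X^2 = 2 p_g(X) - 4$ when $p_g(X) \geq 4$; for $p_g(X) = 3$ one has $\Sigma = \mathbb{P}^2$ with $\nu = 1$, so $K_X^2 = d$ forces $d = 2$. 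When $d = 2$ every inequality in the chain saturates, yielding $\nu = p_g(X) - 2$ and the base-point freeness of the moving system $|M|$.

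I expect the main technical obstacle to be upgrading these equalities to the base-point freeness of $|K_X|$ itself, that is, to showing that the fixed part $F$ vanishes. The equality $K_X \cdot F = 0$ with $K_X$ nef only forces $F$ to be supported on $(-2)$-curves; ruling out $F > 0$ requires a local analysis showing that a non-trivial fixed component would be incompatible with the degree-$2$ map onto a surface of minimal degree, or equivalently, after contracting the $(-2)$-curves to the canonical model, that $F$ would map to singular points whose structure is controlled by the numerics of the double cover.
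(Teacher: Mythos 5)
The paper itself offers no proof of this statement: it is quoted verbatim from Horikawa \cite[Lemma 1.1]{Hor1}, and your outline follows the same classical route (derive $q=0$ and $K_X^2=2p_g-4$ from Noether's inequality, exclude composition with a pencil, bound the degree of the canonical map by $2$ using the minimal-degree bound $\deg\Sigma\geq p_g-2$). However, as written your argument has a genuine gap, and it sits exactly at the first assertion of the theorem: you never prove that the fixed part $F$ of $|K_X|$ vanishes; you only label it the ``main technical obstacle'' and gesture at a local analysis. No such analysis is needed, because your own saturation of the chain already finishes the job: $K_X\cdot M=K_X^2$ gives $K_X\cdot F=0$, and $K_X\cdot M=M^2$ gives $M\cdot F=0$, hence $F^2=F\cdot(K_X-M)=0$; since $K_X^2>0$ and $K_X\cdot F=0$, the Hodge index theorem forces $F$ to be numerically trivial, and a nonzero effective divisor cannot be numerically trivial, so $F=0$. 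Then $|K_X|=|M|$, the equality $M^2=d\deg\Sigma$ rules out base points, and $K_X^2=2\deg\Sigma$ gives $\deg\Sigma=p_g-2$. Without this step your write-up only shows that the moving part defines a degree-$2$ morphism onto a surface of minimal degree, which is weaker than the statement claimed.

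A second, smaller defect is the exclusion of $d=1$ when $p_g(X)=3$: the phrase ``$K_X^2=d$ forces $d=2$'' is not justified at that stage, since you only know $K_X^2\geq d\nu$, not equality. The correct (and easier) argument is that a degree-one canonical map onto $\Sigma=\mathbb{P}^2$ would make $X$ rational, contradicting that $X$ is of general type. For $p_g\geq 4$ your appeal to Castelnuovo's inequality $K_X^2\geq 3p_g-7$ for surfaces with birational canonical map is legitimate, though it is itself a nontrivial classical input rather than something you derive. With these two repairs the proposal becomes a complete proof along essentially the same lines as Horikawa's original argument.
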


\begin{theorem}[{{\cite[Theorem 3.3, Theorem 4.1 and Theorem 7.1]{Hor1}}}]
\label{DefClass}
Let $(K^2, \chi)$ be an admissible pair such that $K^2=2\chi-6$.
If $K^2\notin 8\cdot\mathbb{Z}$ then minimal algebraic surfaces $X$ such
that $K^2_X=K^2$ and $\chi(\mathcal{O}_X)=\chi$ have one and the same deformation type.
If $K^2\in 8\cdot\mathbb{Z}$ then minimal algebraic surfaces $X$ such 
that $K^2_X=K^2$ and $\chi(\mathcal{O}_X)=\chi$ have two deformation classes.
The image of the canonical map of a surface in the first class is $\mathbb{F}_e$
for some $e\in\{0,2,\ldots,\frac{1}{4}K^2\}$. The image of the canonical map of a 
surface in the second class is $\mathbb{F}_{\frac{1}{4}K^2+2}$ if $K^2>8$ and $\mathbb{P}^2$
or a cone over a rational curve of degree $4$ in $\mathbb{P}^4$ if $K^2=8$.
\end{theorem}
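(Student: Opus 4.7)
The starting point is Theorem~\ref{StructureEvenHorikawa}: every minimal $X$ with $K_X^2=2\chi(\mathcal{O}_X)-6$ is exhibited as a degree two morphism $\varphi_{K_X}\colon X\to Y$ onto a non-degenerate surface $Y\subset\mathbb{P}^{p_g(X)-1}$ of degree $p_g(X)-2$, i.e.\ a surface of minimal degree. The first step of the plan is to invoke the classical classification of surfaces of minimal degree: up to the low dimensional exceptions, $Y$ is either a smooth rational normal scroll (equivalently, a Hirzebruch surface $\mathbb{F}_e$ in its natural embedding), the Veronese surface $\mathbb{P}^2\subset\mathbb{P}^5$, or a cone over a rational normal curve. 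The numerical constraints coming from the existence of a reduced branch divisor $B$ and a line bundle $L$ with $2L\equiv B$ will then restrict which of these can actually occur for a given admissible pair $(K^2,\chi)$.

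The second step is a numerical analysis of each candidate $Y=\mathbb{F}_e$ via the $\mathbb{Z}_2$-cover formulas of Proposition~\ref{InvariantsZ2}. Writing $L\equiv aC_0+bf$ in the standard basis of $\operatorname{Pic}(\mathbb{F}_e)$ and imposing $K_X^2=2\chi-6$, $\chi(\mathcal{O}_X)=\chi$, together with the requirement that $K_{\mathbb{F}_e}+L$ realise the minimal degree embedding of $\mathbb{F}_e$, pins down $a$ and $b$ as explicit linear functions of $\chi$ and $e$. A parity check on $2L\equiv B$ forces $e$ to have a fixed parity modulo~$2$, and requiring that $|B|$ contain a reduced member avoiding the negative section $C_0$ caps $e$ at $\tfrac{1}{4}K^2$, except for the exceptional value $e=\tfrac{1}{4}K^2+2$ which only appears when $K^2\in 8\mathbb{Z}$ and forces $C_0$ itself to sit inside $B$ with a fixed multiplicity dictated by smoothness of the cover.

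The third step is the deformation argument. Within a fixed $\mathbb{F}_e$ the reduced members of $|B|$ form an open subset of a projective space, so all resulting Horikawa surfaces are deformation equivalent. To bridge consecutive admissible values of $e$ I would use the classical flat degeneration $\mathbb{F}_{e-2}\leadsto\mathbb{F}_{e}$ and extend the branch data across it, which works as long as the limiting branch locus on $\mathbb{F}_e$ avoids $C_0$; the numerical analysis of the previous step guarantees this for the main series. Chaining these deformations across all $e$ of the correct parity up to $\tfrac{1}{4}K^2$ and absorbing the limit cases $\mathbb{P}^2$ and the quartic cone when $K^2=8$, produces a single deformation class, and, when $K^2\in 8\mathbb{Z}$, a second potential class around the exceptional value $e=\tfrac{1}{4}K^2+2$, which cannot be chained into the first family because the same degeneration forces the branch divisor to acquire $C_0$.

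The main obstacle is to show that these two families give genuinely distinct deformation classes when $K^2\in 8\mathbb{Z}$, rather than being connected by some subtler degeneration missed above. I would address this by exhibiting a deformation invariant separating the two. A natural candidate is the divisibility of the canonical class $K_X$ in $H^2(X,\mathbb{Z})$, since such an integral divisibility is preserved under smooth deformations of the complex structure. A direct computation from $K_X=\varphi_{K_X}^{\ast}(K_{\mathbb{F}_e}+L)$ and the standard presentation of $H^2(\mathbb{F}_e,\mathbb{Z})$ detects a jump in $2$-divisibility between the main series and the exceptional value precisely when $K^2\equiv 0\pmod{8}$, ruling out any deformation between the two classes and thereby completing the dichotomy in the statement.
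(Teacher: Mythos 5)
This theorem is not proved in the paper at all: it is quoted from Horikawa \cite{Hor1}, so your outline has to be measured against Horikawa's argument. Your first three steps (canonical map of degree $2$ onto a surface of minimal degree, the classification of such surfaces into scrolls, the Veronese and cones, the numerology of the branch data on $\mathbb{F}_e$, and connecting the families for the various admissible $e$ by degenerating scrolls) are broadly in line with what Horikawa does. The genuine gap is in the last step, which is precisely the hard point of the theorem: the invariant you propose does not work. For a simply connected surface $X$ the class $K_X$ is $2$-divisible in $H^2(X,\mathbb{Z})$ if and only if the intersection form is even, i.e.\ $X$ is spin, and then Rokhlin's theorem forces $16\mid\sigma(X)$. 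For $K^2=8k$, $\chi=4k+3$ one has $\sigma=K^2-8\chi=-24(k+1)$, which is not divisible by $16$ when $k$ is even; so for instance for $K^2=16$, $\chi=11$ \emph{both} families consist of non-spin surfaces, $K_X$ fails to be $2$-divisible on both sides, and your invariant takes the same value on the two classes. (Indeed, for $k$ even the members of the two families are homeomorphic; that they are nevertheless in different components of the moduli space, and whether they are diffeomorphic, is exactly the content of the classical ``Horikawa problem'', which would be trivial if a divisibility of $K_X$ separated them.) Note also that even where a parity difference might exist, $2$-divisibility of a pulled-back class in $H^2(X,\mathbb{Z})$ is not decided by divisibility of $K_Y+L$ in $\operatorname{Pic}(Y)$ alone, so the ``direct computation'' you invoke is not a computation on $\mathbb{F}_e$.

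What Horikawa actually does to separate the two classes when $8\mid K^2$ is deformation-theoretic rather than topological: he shows that the structure of the canonical degree-$2$ map — in the second class, the factorization through $\mathbb{F}_{\frac{1}{4}K^2+2}$ with $\Delta_0$ split off as a connected component of the branch locus — persists under small deformations, so that each family is open in moduli; since the two families together exhaust all surfaces with these invariants, each is also closed, hence a union of connected components. Your step three also needs more care for the same reason: when one degenerates $\mathbb{F}_{e}$ inside a family of type I surfaces, one must rule out that the canonical image jumps to $\mathbb{F}_{\frac{1}{4}K^2+2}$ (all the relevant $e$ have the same parity, so parity of $e$ does not prevent this); this is again Horikawa's openness analysis, not something that follows from the scroll degeneration alone. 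So the classification half of your plan is sound in outline, but the dichotomy of deformation classes — the actual substance of the statement — is not established by the proposal.
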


\begin{remark}
Given $k\geq 1$ we will denote by $\mathfrak{M}_{8k, 4k+3}^{I}$ (resp. $\mathfrak{M}_{8k, 4k+3}^{II}$) 
the connected component of $\mathfrak{M}_{8k, 4k+3}$ containing  the surfaces on the first
(resp. second) deformation class.
\end{remark}

\begin{remark}\label{M1vsM2}
A smooth surface $Y$ is a deformation of the Hirzebruch surface $\mathbb{F}_m$ if and only if $Y$ is isomorphic to the Hirzebruch surface $\mathbb{F}_n$ for some $n$ such that $n\equiv m(2)$ (cf. \cite[Theorem VI.8.iv]{Barth2004}).
On the other hand, cones over a rational curve of degree $4$ in $\mathbb{P}^4$ are degenerations of $\mathbb{P}^2$ (cf. \cite{Manetti1991}). This gives us an insight about why:
\begin{enumerate}
 \item[(i)] surfaces in $\mathfrak{M}_{8, 7}^{I}$, which are $\mathbb{Z}_2$-covers of the Hirzebruch surface $\mathbb{F}_0$ or the Hirzebruch surface $\mathbb{F}_2$, are deformation equivalent;
 \item[(ii)] surfaces in $\mathfrak{M}_{8, 7}^{II}$, which are $\mathbb{Z}_2$-covers of $\mathbb{P}^2$ or a cone over a rational curve of degree $4$ in $\mathbb{P}^4$, are deformation equivalent;
 \item[(iii)] surfaces in $\mathfrak{M}_{8, 7}^{I}$ are not deformation equivalent to surfaces in $\mathfrak{M}_{8, 7}^{II}$.
\end{enumerate}
In the case $k\geq2$ one may wonder why surfaces in $\mathfrak{M}_{8k, 4k+3}^{I}$ are not deformation equivalent to surfaces in $\mathfrak{M}_{8k, 4k+3}^{II}$ if they are all $\mathbb{Z}_2$-covers of deformation equivalent Hirzebruch surfaces. The reason is that surfaces in $\mathfrak{M}_{8k, 4k+3}^{I}$ are $\mathbb{Z}_2$-covers with connected branch locus whereas 
surfaces in $\mathfrak{M}_{8k, 4k+3}^{II}$ are $\mathbb{Z}_2$-covers with disconnected branch locus.
\end{remark}

\begin{remark}\label{CanonicalHorikawaInM2}
 As we saw in Remark \ref{M1vsM2}, in the case $k\geq2$ the canonical map of a surface $X\in\mathfrak{M}^{II}_{8k, 4k+3}$
induces a  $\mathbb{Z}_2$-cover 
 $\varphi\colon X\to \mathbb{F}_{2k+2}$ of the Hirzebruch surface $\mathbb{F}_{2k+2}$ with negative section $\Delta_0$ of
 self-intersection 
 $-(2k+2)$ and fiber $F$ whose branch locus $B$ is disconnected. More precisely, $B$ consists of $\Delta_0$ and a divisor
 $B'\in|5\Delta_0+10(k+1)F|$ having at 
 most canonical singularities. 
 In particular, it follows from the Riemann-Hurwitz formula that $|\varphi^*F|$ induces a genus $2$ fibration on $X$.
 If we denote by $H$ a general genus $2$ fiber of $X$
 and $\Gamma=(\varphi^*\Delta_0)_{\text{red}}$, 
 the canonical class of $X$ is $2\Gamma+(3k+1)H$ by 
 the standard formulas for $\mathbb{Z}_2$-covers 
 (see Proposition \ref{InvariantsZ2}).
 Thus, the self-intersection of an irreducible component $C$ of a genus $2$ fiber of $X$ has to be even because:
 \begin{equation*}
  C^2=2p_a(C)-2-CK_X=2p_a(C)-2-2C\Gamma.
 \end{equation*} 
 \end{remark}
 
\section{A description of the Hirzebruch surface \texorpdfstring{$\mathbb{F}_e$}{Fe}.}\label{FeDescr}

This section is devoted to present a description of the Hirzebruch surface $\mathbb{F}_e$ with negative section $\Delta_0$ of self-intersection $(-e)$ and fiber $F$ convenient for our purposes. Most of what is gathered in this section can be found in \cite[Chapter 2]{ReidChapters}, but we include it to fix the notation.
% We assume $e\neq 0$ because we will not need to study the case $e=0$.

Denote $\mathbb{C}^*=\mathbb{C}\setminus\{0\}$. Reid \cite[Chapter 2]{ReidChapters} defined the rational scroll $\mathbb{F}(0,e)\simeq \mathbb{F}_e$ as the quotient of $(\mathbb{A}^2\setminus \{0\})\times (\mathbb{A}^2\setminus \{0\})$ under the action of the group $\mathbb{C}^*\times\mathbb{C}^*$ given by:
\begin{center}
 $\xymatrix@R-2pc{
(\mathbb{C}^*\times\mathbb{C}^*)\times ((\mathbb{A}^2\setminus \{0\})\times (\mathbb{A}^2\setminus \{0\}))\ar[r]&(\mathbb{A}^2\setminus \{0\})\times (\mathbb{A}^2\setminus \{0\}),\\
  ((\lambda,\mu), (t_1,t_2;x_1,x_2)) \ar@{|-{>}}[r]& (\lambda t_1,\lambda t_2;\mu x_1, \frac{\mu}{\lambda^{e}} x_2).
}$
\end{center}
The class of $(t_1,t_2;x_1,x_2)\in (\mathbb{A}^2\setminus \{0\})\times (\mathbb{A}^2\setminus \{0\})$ under the equivalence relation induced by this action  will be denoted by $(t_1\colon t_2; x_1\colon x_2)\in\mathbb{F}_e$.
This description allows to define curves on $\mathbb{F}_e$ via polynomials in $\mathbb{C}[t_1,t_2,x_1,x_2]$ whose set of zeros is invariant by the 
action of $\mathbb{C}^*\times\mathbb{C}^*$. In particular, if $e\neq 0$:
\begin{enumerate}
 \item[-] $(\alpha t_1+\beta t_2=0)\in|F|$ defines a different fiber 
 for each $(\alpha\colon \beta)\in\mathbb{P}^1$;
 \item[-] $(x_2=0)\in|\Delta_0|$ defines the negative section;
 \item[-] $(x_1=0)\in|\Delta_0+eF|$ defines an irreducible section disjoint from the negative section.
\end{enumerate}
In addition, $\mathbb{F}_e$ can be covered by the open subsets $(t_ix_j\neq 0), i,j\in\{1,2\}$ and each open subset $(t_ix_j\neq 0)$ is isomorphic to $\mathbb{A}^2$. 
 
 \section{Horikawa surfaces with a \texorpdfstring{$\mathbb{Z}_3$}{Z3}-action.}\label{SectionZ3OnEvenHorikawa}
 
The aim of this section is to prove Theorem \ref{Z3OnEvenHorikawa}. 
We are going to proceed as follows. 
 We fix an admissible pair $(K^2, \chi)$ such that $K^2=2\chi-6$.
To begin with, we are going to find a surface in $\mathfrak{M}_{K^2, \chi}$ with a $\mathbb{Z}_3$-action. Then
 we are going to check that it
belongs to the first deformation class when $K^2\in 8\cdot \mathbb{Z}$. Finally, we are going to find a surface with a $\mathbb{Z}_3$-action in 
$\mathfrak{M}^{II}_{8k, 4k+3}$ for every integer $k\geq 1$.

% In order to do that, we will start considering minimal resolutions of $\mathbb{Z}_3$-covers of Hirzebruch surfaces. 
% 
% Then we are going to check that the surfaces constructed
% belong to $\mathfrak{M}^I_{K^2, \chi}$ when $\chi=4k+3$ and $K^2=8k$ for some integer $k\geq 1$. Finally, we are going to construct surfaces in $\mathfrak{M}^{II}_{8k, 4k+3}$ for every $k\geq 1$.

% they
% belong to $\mathfrak{M}^{I}_{K^2, \chi}$ when $K^2\in 8\cdot \mathbb{Z}$ and we are going to find canonical models with a $\mathbb{Z}_3$-action in $\mathfrak{M}^{II}_{8k, 4k+3}$ for $k=1$ and $k\geq 2 \equiv 2(3)$.
% We are going to proceed as follows. 
%  We fix an admissible pair $(K^2, \chi)$ such that $K^2=2\chi-6$.
% To begin with, we are going to find a surface in $\mathfrak{M}_{K^2, \chi}$ with a $\mathbb{Z}_3$-action. Then
%  we are going to check that it
% belongs to the first deformation class when $K^2\in 8\cdot \mathbb{Z}$. Finally, we are going to show that 
% $\mathfrak{M}^{II}_{8k, 4k+3}$ contains surfaces with a $\mathbb{Z}_3$-action if and only if $k=1$
% or $k\equiv 2(3)$.
 Let us choose integers $e,\alpha,\beta$ as follows:
\begin{enumerate}
 \item[-] If $\chi\equiv 0(3)$ we take $e=1, \alpha=\chi,\beta=3$.
 \item[-] If $\chi\equiv 1(3)$ we take $e=0, \alpha=\chi,\beta=1$.
 \item[-] If $\chi\equiv 2(3)$ we take $e=2, \alpha=\chi,\beta=5$.
\end{enumerate}
Let $\mathbb{F}_e$ be the Hirzebruch surface with negative section $\Delta_0$ of self-intersection $(-e)$
and fiber $F$. We consider smooth and irreducible divisors $D_1\in |2\Delta_0+\alpha F|$ and 
$D_2\in|2\Delta_0+\beta F|$ intersecting transversally in $2\alpha+2\beta-4e$ points in general position
$p_1,\ldots,p_{2\alpha+2\beta-4e}$. We are going to construct the canonical resolution (see Example \ref{1311Singularities}) of a 
$\mathbb{Z}_3$-cover of $\mathbb{F}_e$ with branch locus $B=D_1+D_2$.
Let $q\colon \widetilde{\mathbb{F}}_e\to\mathbb{F}_e$ 
be the blow-up of $\mathbb{F}_e$ at $p_1,\ldots,p_{2\alpha+2\beta-4e}$ with exceptional divisors 
$E_1,\ldots,E_{2\alpha+2\beta-4e}$. We define a smooth $\mathbb{Z}_3$-cover $\pi\colon S\to \widetilde{\mathbb{F}}_e$
with branch locus $\widetilde{B}=\widetilde{D}_1+\widetilde{D}_2$ consisting of
\begin{equation*}
 \begin{split}
  \widetilde{D}_1=q^*D_1-\sum_{i=1}^{2\alpha+2\beta-4e}E_i,\\
  \widetilde{D}_2=q^*D_2-\sum_{i=1}^{2\alpha+2\beta-4e}E_i.
 \end{split}
\end{equation*}
It follows from Proposition \ref{InvariantsCyclic} that:
\begin{equation*}
 \begin{split}
 3K_S\equiv\pi^*(3K_{\widetilde{\mathbb{F}}_e}+2\widetilde{D}_1+2\widetilde{D}_2)\equiv \pi^*\left((\alpha+2\beta-3e-6)q^*F+\widetilde{D}_1\right),\\
  K^2_S=2\alpha+2\beta-4e-8=K^2,\\
  \chi(\mathcal{O}_S)=\alpha+\beta-2e-1=\chi.
 \end{split}
\end{equation*}
Since  $3K_{S}$ is the pullback via $\pi$ of a nef divisor, $S$ is minimal.
Therefore the canonical model of $S$ belongs
to $\mathfrak{M}_{K^2, \chi}$ and has a $\mathbb{Z}_3$-action. 
% This proves Theorem \ref{Z3OnEvenHorikawa}.

Now we are going to show that the surfaces that we have just constructed belong to $\mathfrak{M}^I_{K^2, \chi}$ when $\chi=4k+3$ and $K^2=8k$ for some integer $k\geq 1$. 

In the case $k\geq2$ we notice that the fibration $\mathbb{F}_e\to \mathbb{P}^1$ induces a genus $2$ fibration on $S$ such that the genus $2$ fiber of $S$ 
 %(see Remark \ref{CanonicalHorikawaInM2})
 corresponding to the fiber of $\mathbb{F}_e$ through 
 $p_i,i\in\{1,\ldots,2\alpha+2\beta-4e\}$ consists of two $(-3)$-curves %$C^i_1,C^i_2$ 
 intersecting transversally in three different points (see Figure \ref{PictureFibers2x(-3)}). Moreover, $S$ does not admit another genus $2$ fibration by \cite[Proposition 6.4]{XiaoGangGen2}.
 Since the irreducible components of the fibers of the genus $2$ fibration that a surface in $\mathfrak{M}^{II}_{8k, 4k+3}$ has by Remark \ref{CanonicalHorikawaInM2} have even self-intersection (see Remark \ref{CanonicalHorikawaInM2}), we conclude that
 the canonical model of $S$ belongs to $\mathfrak{M}^{I}_{K^2, \chi}$ as claimed.

 In the case $k=1$ the divisor $\widetilde{D}_2\subset \widetilde{\mathbb{F}}_e$ is a $2$-section with self-intersection $-12$. 
 Hence $\pi^*(\widetilde{D}_2)_{\text{red}}$ is a $2$-section of $S$ with self-intersection $-4$. I claim that its image via the canonical 
 map of $S$ %(see Theorem \ref{StructureEvenHorikawa})
 is a section with self-intersection $-2$. Indeed, the divisor $\pi^*(\widetilde{D}_1+\widetilde{D}_2)_{\text{red}}$ is invariant by the canonical involution $\tau$ of $S$ and the divisors $\pi^*(\widetilde{D}_1)_{\text{red}}$ and $\pi^*(\widetilde{D}_2)_{\text{red}}$ cannot be switched by it since they have different linear equivalence classes. Then $\pi^*(\widetilde{D}_2)_{\text{red}}$ is invariant by $\tau$ and we have two possibilities depending on whether $\tau$ restricted to $\pi^*(\widetilde{D}_2)_{\text{red}}$ is the trivial automorphism or not.
 In the former case the canonical map of $S$ sends 
 $\pi^*(\widetilde{D}_2)_{\text{red}}$ to a self-intersection $-8$ bisection of the canonical image of $S$, which contradicts Theorem \ref{DefClass}.
  Therefore $\tau$ restricted to $\pi^*(\widetilde{D}_2)_{\text{red}}$ is not the trivial automorphism and
 the canonical map of $S$ sends 
 $\pi^*(\widetilde{D}_2)_{\text{red}}$ to a self-intersection $-2$ section of the canonical image of $S$ as claimed. 
 It follows from Theorem \ref{DefClass} that the canonical map of $S$ has $\mathbb{F}_2$ as image and sends 
 $\pi^*(\widetilde{D}_2)_{\text{red}}$ to the negative section of this Hirzebruch surface. 
 Therefore the canonical model of $S$ belongs to $\mathfrak{M}^{I}_{8,7}$ again by Theorem \ref{DefClass}.
 
  \begin{figure}
\begin{center}
\scalebox{0.8}{
 \begin{tikzpicture}
 \draw[thick, dashed](0,2)--(0,-2) ;
  \draw[thick, blue](-1/2,1/2)--(1/2,3/2);
   \filldraw(0,1) circle (2pt) node at (-3/4,3/4) {$p_i$};
 \draw[thick](-1/2,3/2)--(1/2,1/2) node at (-1,2) {$\mathbb{F}_{e}$} ;
 \draw[thick](-1/2,-1/2)--(1/2,-1/2) node at (-1,-1/2) {$D_1$} ;
 \draw[thick, blue](-1/2,-1)--(1/2,-1)  node at (-1,-1) {$D_2$};
  \draw[->] (3,0)--(1,0) node at (2,1/2) {$q$};
 \end{tikzpicture}
 \begin{tikzpicture}
 \draw[thick, dashed](0,2)--(0,-2);
 %\filldraw(0,1) circle (2pt);
 \draw[thick, dashed](-1/2,1)--(3/2,1) node at (-3/4,3/4) {$E_i$} ;
 \draw[thick](1/2,1/2)--(1/2,3/2) node at (-1,2) {$\widetilde{\mathbb{F}}_e$} ;
 \draw[thick, blue](1,1/2)--(1,3/2);
 %\draw[thick](3/2,1/2)--(3/2,3/2);
 %\draw[thick](-1/2,0)--(1/2,0);
 \draw[thick](-1/2,-1/2)--(1/2,-1/2) node at (-1,-1/2) {$\widetilde{D}_1$};
 \draw[thick, blue](-1/2,-1)--(1/2,-1) node at (-1,-1) {$\widetilde{D}_2$};
  \draw[->] (4,0)--(2,0) node at (3,1/2) {$\pi$};
 \end{tikzpicture}
  \begin{tikzpicture}
 \draw[thick, white](0,2)--(0,-2);
 \draw[scale=0.5, thick, smooth, domain=-3:3, variable=\x] plot ({sin(\x*80)}, \x) node at (-2,3) {$S$};
 \draw[scale=0.5, thick, smooth, domain=-3:3, variable=\x] plot ({-sin(\x*80)}, \x);
 \end{tikzpicture}
 }
 \end{center}
 \caption{Fiber of $S$ corresponding to the fiber of $\mathbb{F}_e$ through $p_i$.} \label{PictureFibers2x(-3)}
  \end{figure}
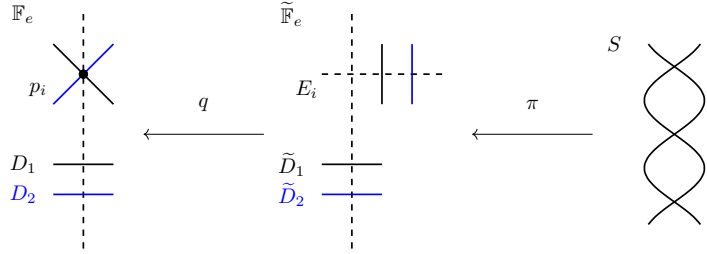

\medskip

Now we are going to construct surfaces with a $\mathbb{Z}_3$-action in $\mathfrak{M}_{8k, 4k+3}^{II}$ for every integer $k\geq 1$.

Let us assume first that $k=1$. Then we consider a $\mathbb{Z}_2$-cover $X\to \mathbb{P}^2$ of 
 $\mathbb{P}^2=\text{Proj}(\mathbb{C}[X_0,X_1,X_2])$ branched along
 the smooth and irreducible curve $B=(X_0^{10}+X_1^{10}+X_2^{10}=0)\in|\mathcal{O}_{\mathbb{P}^2}(10)|$. We also consider the order $3$
 automorphism  
 \begin{equation*}
  \sigma\colon\mathbb{P}^2\to \mathbb{P}^2, (X_0\colon X_1\colon X_2)\mapsto (X_2\colon X_0 \colon X_1).
 \end{equation*}
 Since $B$ is invariant under $\sigma$, the automorphism $\sigma$ lifts to an automorphism of $X$ by \cite[Section 2.2]{MendespardiniLift}. It follows
 from Remark \ref{CanonicalImageSimpleCyclicCover} that the canonical image of $X$ is $\mathbb{P}^2$ and therefore
 Theorem \ref{DefClass} allows us to conclude that
 $X$ is a smooth Horikawa surface  in
 %the second connected component of $\mathfrak{M}_{8,7}$ 
 $\mathfrak{M}^{II}_{8,7}$ that has a $\mathbb{Z}_3$-action.
 
 Now we assume $k\geq2$. Using the notation of Section \ref{FeDescr}, we define a curve $C\in |5\Delta_0+(10k+10)F|\subset\mathbb{F}_{2k+2}$ as follows: 
 \begin{enumerate}
  \item[-] If $k\equiv 2(3)$, $C=(x_1^5+x_2^5t_1^{10k+10}+x_2^5t_2^{10k+10}=0)$.
  \item[-] If $k\equiv 0(3)$, $C=(x_1^5+x_2^5t_1^{10k+9}t_2+x_2^5t_2^{10k+10}=0)$.
  \item[-] If $k\equiv 1(3)$, $C=(x_1^5+x_2^5t_1^{10k+8} t_2^2+x_2^5t_2^{10k+10}=0)$.
 \end{enumerate}
 Writing the equation of $C$ on $(t_ix_j\neq 0)\simeq \mathbb{A}^2, i,j\in\{1,2\}$, a straightforward computation yields that in the first two cases $C$ is smooth and in the third case $C$ has a unique singularity at $(1\colon 0;0\colon 1)$. This singularity is isomorphic to $(C',0)$ where:
 \begin{equation*}
  C'=\text{Spec}\left(\frac{\mathbb{C}[a,b]}{(a^2+a^{10k+10}+b^5)}\right)
 \end{equation*}
and therefore it is of type $A_4$ (see \cite[Section II.8]{Barth2004}).
In particular, if we denote by $\pi\colon X\to \mathbb{F}_{2k+2}$ the $\mathbb{Z}_2$-cover of $\mathbb{F}_{2k+2}$ branched along $B=\Delta_0+C$, either the surface $X$ is smooth or it has a singularity of type $A_4$.
%(cf. \cite[Section III.7]{Barth2004}).
Moreover, by the standard formulas for $\mathbb{Z}_2$-covers:
\begin{equation*}
 \begin{split}
  2K_X\equiv\pi^*(2K_{\mathbb{F}_{2k+2}}+B)\equiv\pi^*(2\Delta_0+(6k+2)F),\\
  K^2_X=8k,\\
  \chi(\mathcal{O}_X)=4k+3.
 \end{split}
\end{equation*}
Note that $K_X$ is ample because it is the pullback via $\pi$ of the ample divisor $\Delta_0+(3k+1)F$. Therefore $X$ is a canonical model. In addition, 
since $\Delta_0+(3k+1)F$ is not only ample but very ample and $h^0(K_{\mathbb{F}_{2k+2}})=0$ it follows from Remark 
\ref{CanonicalImageSimpleCyclicCover} that the canonical map 
of $X$ is the composition 
of $\pi$ with the map induced by the complete linear system $|\Delta_0+(3k+1)F|$ and therefore the canonical 
image of $X$ is $\mathbb{F}_{2k+2}$. Therefore $X$ belongs to $\mathfrak{M}^{II}_{8k, 4k+3}$ by Theorem \ref{DefClass}.

Let us consider the order $3$ automorphism 
\begin{equation*}
 \sigma\colon\mathbb{F}_{2k+2}\to \mathbb{F}_{2k+2}, (t_1\colon t_2;x_1\colon x_2)
 \mapsto (\zeta t_1\colon t_2;x_1\colon x_2),
\end{equation*}
where $\zeta$ is a primitive $3$-root of unity. Since $C$ and $\Delta_0$ are invariant by $\sigma$, the automorphism $\sigma$ induces an order $3$ automorphism of $X$
by \cite[Section 2.2]{MendespardiniLift}. Thus $X\in \mathfrak{M}_{8k, 4k+3}^{II}$ has a $\mathbb{Z}_3$-action. This finishes the proof of Theorem \ref{Z3OnEvenHorikawa}.

\section{Contracting \texorpdfstring{$(-3)$}{3}-curves.}\label{Z3OnStable}
Let $(K^2, \chi)$ be an admissible pair such that $K^2=2\chi-6$. In Section \ref{SectionZ3OnEvenHorikawa} we were able to construct
surfaces $S\in \mathfrak{M}_{K^2, \chi}$ with a $\mathbb{Z}_3$-action. Moreover, we constructed surfaces $S$ that had a genus 
$2$ fibration with $2\chi+2$
fibers consisting of two $(-3)$-curves intersecting transversally in three different points (see Figure \ref{PictureFibers2x(-3)}).
In particular, we can choose three disjoint $(-3)$-curves of $S$ and contract them $c\colon S\to X$. 
According to Example \ref{1311Singularities} the surface 
$X$ has three $\frac{1}{3}(1,1)$-singularities and
\begin{equation*}
 K^2_X=K^2_S+1=2\chi(\mathcal{O}_S)-5=2\chi(\mathcal{O}_X)-5.
\end{equation*}
The proof of Theorem \ref{MoreIrred} consists in constructing carefully the surfaces $X$ just described.

\begin{proof}[Proof of Theorem \ref{MoreIrred}]
 Given an admissible pair $(K^2, \chi)$ such that $K^2=2\chi-5$ we choose integers $e,\alpha,\beta$ as follows:
\begin{enumerate}
 \item[-] If $\chi\equiv 0(3)$ we take $e=1, \alpha=\chi,\beta=3$.
 \item[-] If $\chi\equiv 1(3)$ we take $e=0, \alpha=\chi,\beta=1$.
 \item[-] If $\chi\equiv 2(3)$ we take $e=2, \alpha=\chi,\beta=5$.
\end{enumerate}
Let $\mathbb{F}_e$ be the Hirzebruch surface with negative section $\Delta_0$ of self-intersection $(-e)$ and fiber $F$.
We consider smooth and irreducible divisors $D_1\in |2\Delta_0+\alpha F|$ and 
$D_2\in|2\Delta_0+\beta F|$ intersecting transversally in $2\alpha+2\beta-4e$ points 
in general position $p_1,\ldots,p_{2\alpha+2\beta-4e}$. Let $q\colon \overline{\mathbb{F}}_e\to\mathbb{F}_e$ 
be the blow-up of $\mathbb{F}_e$ at $p_4,\ldots,p_{2\alpha+2\beta-4e}$ with exceptional divisors 
$E_4,\ldots,E_{2\alpha+2\beta-4e}$. We define a $\mathbb{Z}_3$-cover $\pi\colon X\to \overline{\mathbb{F}}_e$
with branch locus $\overline{B}=\overline{D}_1+\overline{D}_2$ consisting of
\begin{equation*}
 \begin{split}
  \overline{D}_1=q^*D_1-\sum_{i=4}^{2\alpha+2\beta-4e}E_i,\\
  \overline{D}_2=q^*D_2-\sum_{i=4}^{2\alpha+2\beta-4e}E_i.
 \end{split}
\end{equation*}
It follows from the formulas for $\mathbb{Z}_3$-covers that:
\begin{equation*}
\begin{split}
 3K_X=\pi^*\left(q^*(2\Delta_0+(2\alpha+2\beta-3e-6)F)-\sum_{i=4}^{2\alpha+2\beta-4e}E_i\right),\\
  K^2_X=2\alpha+2\beta-4e-7=2\chi-5,\\
  \chi(\mathcal{O}_{X})=\alpha+\beta-2e-1=\chi.
\end{split}
\end{equation*}
In addition, using Nakai-Moishezon criterion we can prove that the divisor 
\begin{equation*}
 D:=q^*(2\Delta_0+(2\alpha+2\beta-3e-6)F)-\sum_{i=4}^{2\alpha+2\beta-4e}E_i
\end{equation*}
is ample. Indeed, let us suppose that there exists an irreducible curve $C\in |q^*(a\Delta_0+bF)-\sum_{i=4}^{2\alpha+2\beta-4e}c_iE_i|$ such that $CD<0$ for some non negative integers $a,b,c_i$. This implies that $q(C)\in |a\Delta_0+bF|$ is an irreducible curve of $\mathbb{F}_{e}$ such that the sum of the multiplicities 
 of $q(C)$ at $p_4,\ldots, p_{2\alpha+2\beta-4e}$ is
 \begin{equation*}
  \sum_{i=4}^{2\alpha+2\beta-4e}c_i>(2\alpha+2\beta-5e-6)a+2b.
 \end{equation*}
 Now, if we choose $R\in|\Delta_0+(\alpha+\beta-e-2)F|$ passing through $p_4,\ldots, p_{2\alpha+2\beta-4e}$ then:
 \begin{equation*}
  %\begin{split}
   (\alpha+\beta-2e-2)a+b=R\cdot q(C)\geq \sum_{i=4}^{2\alpha+2\beta-4e}c_i>(2\alpha+2\beta-5e-6)a+2b.%,\\
   %R\cdot q(C)=.
  %\end{split}
 \end{equation*}
 By the choices of $\alpha,\beta, e$ this is only possible if $\alpha=\beta=3, e=1$,
 which corresponds to the case $(K^2,\chi)=(1,3)$.  Moreover, since $q(C)$ is irreducible, we necessarily have $a=1, b=0$ (cf. \cite[Corollary V.2.18]{Hartshorne1977}), i.e. $q(C)$ is the negative section of $\mathbb{F}_1$ and passes through $p_4,\ldots, p_8$. This contradicts the fact that the points $p_i$ are in general position.
Since $3K_X$ is the pullback of $D$ via $\pi$, we conclude that $K_X$ is ample.
On the other hand, the only singularities of $X$ are three $\frac{1}{3}(1,1)$-singularities over $\overline{D}_1\cap \overline{D}_2$.
Since these singularities are log canonical but do not admit a $\mathbb{Q}$-Gorenstein smoothing (see Example \ref{1311Singularities}), it follows that $X$ is a non-smoothable stable surface. In other words, $X$ belongs to the KSBA-compactification
$\overline{\mathfrak{M}}_{K^2, \chi}$ of Gieseker's moduli space $\mathfrak{M}_{K^2, \chi}$ but
it is contained in an irreducible component of $\overline{\mathfrak{M}}_{K^2, \chi}$ without canonical models. 

Furthermore, $X$ is contained in a connected component of $\overline{\mathfrak{M}}_{K^2, \chi}$ without canonical models. Indeed, according to \cite[Corollary 2.6]{Rollenske2021} it suffices to show that $h^0(2K_X)\neq \chi(\mathcal{O}_X)+ K^2_X$. Now, $h^0(2K_X)=\chi(2K_X)$ by \cite[Proposition 3.6]{LiuRollenske2014} and the right hand-side of this equality can be computed using the Riemann-Roch theorem for Weil divisors on normal surfaces \cite[Theorem 1.2]{Blache}. More precisely, denoting by $\text{Sing} (X)$ the singular locus of $X$,
\begin{equation*}
 \chi(2K_X)=\chi(\mathcal{O}_X)+K^2_X+\sum_{x\in \text{Sing} (X)} R_{X,x}(2K_X),
\end{equation*}
where the contribution $R_{X,x}(2K_X)$ is a local correction term that equals $-\frac{1}{3}$ if $x\in X$ is a $\frac{1}{3}(1,1)$-singularity (see \cite[Lemma 5.4]{Blache}). 
Since the only singularities of $X$ are three $\frac{1}{3}(1,1)$-singularities,
\begin{equation*}
 h^0(2K_X)=\chi(2K_X)=\chi(\mathcal{O}_X)+K^2_X-1\neq \chi(\mathcal{O}_X)+K^2_X
\end{equation*}
and our claim follows.
\end{proof}

\begin{remark}
Let $\varepsilon$ be a positive integer such that $3\cdot \varepsilon\leq 2\chi+2$. Then we can contract $3\cdot \varepsilon$ disjoint $(-3)$-curves of $S$. We obtain in this way a surface $X$ with $3\cdot \varepsilon$ singularities of type $\frac{1}{3}(1,1)$ and $K^2_X=2\chi(\mathcal{O}_X)-6+\varepsilon$. In particular $3K_X^2\leq 8\chi(\mathcal{O}_X)-16$.
\end{remark}

\noindent \begin{acknowledgements}
The author is deeply indebted to his supervisor Margarida Mendes Lopes for all her help. The author also thanks Rita Pardini for pointing out some mistakes and suggesting ways to improve the note. Thanks are also due to Sönke Rollenske for noticing that the stable surfaces constructed belong to a connected component without canonical models and not just to an irreducible component without canonical models. Finally, the author would like to express his gratitude to the anonymous reviewers for their thorough reading of the paper and suggestions.
\end{acknowledgements}

\bibliographystyle{plain}      
\bibliography{Z3ActionsOnHorikawaSurfaces}
 \vspace{5mm}
% \newpage

\noindent Vicente Lorenzo \footnote{The author is a doctoral student of the Department of Mathematics and
Center for Mathematical Analysis, Geometry and Dynamical Systems of Instituto Superior T\'{e}cnico,
Universidade de Lisboa and is supported by Fundac\~{a}o para a Ci\^{e}ncia e a Tecnologia (FCT), Portugal through
the program Lisbon Mathematics PhD (LisMath), scholarship  FCT - PD/BD/128421/2017 and
projects UID/MAT/04459/2019 and UIDB/04459/2020.}\\
Center for Mathematical Analysis, Geometry and Dynamical Systems\\
Departamento de Matem\'{a}tica\\
Instituto Superior T\'{e}cnico\\
Universidade de Lisboa\\
Av. Rovisco Pais\\
1049-001 Lisboa\\
Portugal\\
\textit{E-mail address: }{vicente.lorenzo@tecnico.ulisboa.pt}\\
https://orcid.org/0000-0003-2077-6095

\end{document}